\newtheorem{theorem}{Theorem}[section]
\newtheorem{lemma}[theorem]{Lemma}
\newtheorem{claim}[theorem]{Claim}
\newtheorem{problem}[theorem]{Problem}
\renewcommand{\d}{\delta}
\newcommand\eps{\varepsilon}
\newcommand{\Prob}{\mathbb{P}}
\newcommand{\G}{{\mathcal{G}}}
\newcommand{\of}[1]{\left( #1 \right)}
\newcommand{\set}[1]{\left\{ #1 \right\}}
\newcommand{\sqbs}[1]{\left[ #1 \right]}
\renewcommand{\Pr}[1]{\mathbb{P}\sqbs{#1}}
\newcommand{\bfrac}[2]{\of{\frac{#1}{#2}}}
\newcommand{\Bin}{\textrm{Bin}}
\newcommand{\Aa}{{\mathcal{A}}}
\newcommand{\wh}[1]{\widehat{#1}}
\newcommand{\gout}[1]{\G_{n,#1\textrm{-out}}}
\newcommand{\dout}[1]{\mathcal{D}_{n,#1\textrm{-out}}}
\title{A Ramsey Property of Random Regular and $k$-out Graphs}
\author{{\Large Michael Anastos} \thanks{\texttt{manastos@andrew.cmu.edu}} \\ Department of Mathematical Sciences \\ Carnegie Mellon University \\ Pittsburgh, PA; USA
 \and {\Large Deepak Bal} \thanks{\texttt{deepak.bal@montclair.edu}} \\ Department of Mathematical Sciences \\ Montclair State University \\ Montclair, NJ; USA}
\date{}
\begin{document}
\maketitle

\begin{abstract}
In this note we consider a Ramsey property of random $d$-regular graphs, $\G(n,d)$. Let $r\ge 2$ be fixed. Then w.h.p.\ the edges of $\G(n, 2r)$ can be colored such that every monochromatic component has size $o(n)$. On the other hand, there exists a constant $\gamma > 0$ such that  w.h.p., every $r$-coloring of the edges of $\G(n, 2r+1)$ must contain a monochromatic cycle of length at least $\gamma n$.
We prove an analogous result for random $k$-out graphs.
\end{abstract}

\section{Introduction}

We are concerned with the following Ramsey-type question: if the edges of a graph are $r$-colored (not necessarily properly), what is the largest monochromatic component (or path, or cycle) which must appear? 



This question was considered for Erd\H{o}s-R\'{e}nyi random graphs $G(n,p)$ independently by Bohman, Frieze, Krivelevich, Loh and Sudakov \cite{BFKLS} and  by Sp\"{o}hel, Steger and Thomas \cite{SST}. They proved that for every $r\ge 2$ there is a constant $\psi_r$ such that if $c < \psi_r - \eps$, then w.h.p.\ $G(n, c/n)$ admits an $r$-edge coloring where all monochromatic components are of order $o(n)$. If $c > \psi_r + \eps$, they prove that w.h.p.\ every edge coloring contains a monochromatic component of order $\Omega(n)$. The constant $\psi_r$ actually arises from the so-called $r$-orientability threshold which was discovered independently by Cain, Sanders and Wormald \cite{CSW} and by Fernholz and Ramachandran \cite{FR}. 
A graph is called \emph{$r$-orientable} if its edges can be oriented such that the maximum in-degree of any vertex is at most $r$. 
Recently, Krivelevich \cite{K17} improved upon the results in \cite{BFKLS} and \cite{SST} by showing that w.h.p.\ every edge $r$-coloring of $G(n, \frac{\psi_r+\eps}{n})$ contains not only a linear sized monochromatic component, but actually a linear length monochromatic cycle. The result follows from a nice theorem which proves the existence of long cycles in locally sparse graphs (stated as Theorem \ref{thm:k17} below).

For regular graphs, Thomassen \cite{Tho} proved that every $3$-regular graph has a 2-coloring of its edges such that every monochromatic component is a path of length at most 5. Alon et.\ al.\ \cite{ADO} proved that every $(2r-1)$-regular graph can be edge $r$-colored such that each monochromatic component contains at most $120r - 123$ edges.
On the other hand, they prove that there exist $2r$-regular graphs on $n$ vertices such that every edge $r$-coloring contains a monochromatic cycle of length at least $\Omega(\log n)$.

Our first theorem provides an analog of the results of \cite{BFKLS}, \cite{SST} and \cite{K17} in the setting of random $d$-regular graphs, $\G(n,d)$. 
\newpage 
\begin{theorem}
\label{thm:main}
For each fixed $r\geq 2$, there exists a constant $\gamma>0$ such that w.h.p. 
\begin{enumerate}
\item there exists an $r$-coloring of the edges of $\G(n, 2r)$ such that the largest monochromatic component has order $o(n)$;
\item every $r$-coloring of the edges of $\G(n, 2r+1)$ contains a monochromatic cycle of length at least $\gamma n.$
\end{enumerate}
\end{theorem}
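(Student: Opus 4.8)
Both parts hinge on one structural fact about densities. A $2r$-regular graph has exactly $rn$ edges (density $r$) and, being Eulerian, admits an orientation with in-degree exactly $r$ at every vertex; a $(2r+1)$-regular graph has $(2r+1)n/2$ edges (density $r+\tfrac12$), one notch past $r$-orientability. The first thing I would prove is a uniform local-sparsity statement, valid for both models: for every $\delta>0$ there is an $\alpha>0$ such that w.h.p.\ every vertex set $S$ with $|S|\le \alpha n$ spans at most $(1+\delta)|S|$ edges. This is a standard first-moment estimate in the configuration model: the expected number of sets of size $k$ spanning $(1+\delta)k$ edges decays geometrically once $k\le \alpha n$ with $\alpha$ small, and summing over $k$ gives the claim. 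In particular every linear-sized subgraph of either graph is locally almost a forest.

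For part (ii) I would then argue as follows. Fix any $r$-coloring of $\G(n,2r+1)$. Since the graph has $(2r+1)n/2$ edges, by pigeonhole some color class $F$ has at least $\tfrac{(2r+1)n}{2r}=(1+\tfrac{1}{2r})\,n$ edges, i.e.\ a linear edge-excess over a spanning tree. As $F\subseteq\G(n,2r+1)$, it inherits the local-sparsity property above with the same $\alpha,\delta$. I would now feed $F$ into Theorem~\ref{thm:k17}: with global excess parameter $\eps_0=\tfrac{1}{2r}$ fixed and $\delta$ chosen small relative to $\eps_0$, the hypotheses (linearly many edges together with local sparsity at scale $\alpha n$) are met, and the theorem outputs a cycle of length $\Omega(\alpha n)$ inside $F$. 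Taking $\gamma$ to be this implied constant gives a monochromatic cycle of length at least $\gamma n$. The one point needing care is that the $\eps_0 n$ excess edges of $F$ must not all hide inside many small dense sets; this is exactly what local sparsity forbids, so the excess is forced into a linear-sized, locally-sparse piece where Theorem~\ref{thm:k17} applies.

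For part (i) I would start from the Eulerian in-degree-$r$ orientation and color, at each vertex, its $r$ in-edges with the $r$ distinct colors. Each color class then has in-degree at most $1$, hence is a pseudoforest (every component at most unicyclic), which caps its edge-excess but not its size. The substantive step—mirroring the positive direction of \cite{BFKLS,SST} at their orientability threshold—is to choose the local color-bijections so that every class additionally has all components of order $o(n)$. Here the difficulty is genuine: each class has average degree exactly $2$, so it sits precisely at the percolation threshold of the $2r$-regular host, and a quick computation shows that any balanced or independent random assignment makes each color supercritical (retention $1/r$ exceeds the threshold $1/(2r-1)$), producing a linear giant in every color. One therefore cannot aim for subcritical (bounded- or logarithmic-component) classes as in the ADO regime for $(2r-1)$-regular graphs; the classes are inherently critical, and the $o(n)$ bound is critical-window behavior that must be produced by a structured, non-random coloring.

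The main obstacle is thus part (i): engineering a single coloring that keeps all $r$ classes simultaneously at or below criticality. My plan for this step is to use the local sparsity and edge-expansion of $\G(n,2r)$ to rule out large monochromatic components directly—showing that any putative monochromatic component $S$ of size between $\alpha n$ and $n/2$ would, by expansion, send $\Omega(|S|)$ edges of other colors across its boundary, and then to iteratively re-color along these sparse cuts to shatter $S$ without creating a giant in another color, the density accounting (total density exactly $r$) guaranteeing that the recoloring can be balanced across the $r$ classes. Making this shattering argument converge—so that the recoloring terminates with every class having components $o(n)$ rather than merely shifting a giant from one color to another—is the crux, and is precisely where the boundary density $r$ (as opposed to $r+\tfrac12$ in part (ii)) is used.
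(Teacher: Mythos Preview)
Your treatment of part~(ii) is correct and is essentially the paper's proof: local sparsity via a first-moment computation in the configuration model, then Theorem~\ref{thm:k17} applied to the majority color class with $c_1=1+\tfrac{1}{2r}$ and $c_2=1+\tfrac{1}{4r}$.

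Part~(i), however, has a genuine gap. Your Eulerian-orientation step correctly produces $r$ pseudoforests, and you correctly diagnose that a random bijection at each vertex makes every color supercritical. But the proposed repair---iteratively recoloring across sparse cuts to shatter large components---is not a proof: there is no algorithm specified, no termination argument, and no mechanism ruling out that the giant simply migrates from one color to another (you say as much: ``making this shattering argument converge\ldots is the crux''). The density-accounting heuristic does not by itself prevent this, since the total density constraint is global while the recoloring is local.

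The paper bypasses this difficulty entirely via a different idea you are missing: contiguity of $\G(n,2r)$ with the sum $H_1+\cdots+H_r$ of $r$ independent uniform Hamilton cycles on $[n]$. One uses $H_1$ to cut $[n]$ into $n^{0.3}$ consecutive intervals $V_1,\ldots,V_{n^{0.3}}$ of size $\sim n^{0.7}$; color~1 receives every edge with both endpoints in a common $V_j$ (so color-1 components have order $\le n^{0.7}$ trivially), and color~$i\ge 2$ receives $E(H_i)$ minus those internal edges. A short stopping-time calculation then shows that w.h.p.\ no $H_i$ can run for $n^{0.4}$ steps without hitting an edge internal to some $V_j$, so each color $i\ge 2$ is a union of paths of length $<n^{0.4}$. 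The Hamilton-cycle decomposition is the structural input that replaces your critical-percolation balancing act with a direct construction.
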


We note that as in the case of binomial random graphs, this ``threshold" also corresponds to the orientability threshold for regular graphs. Indeed, a result of Hakimi \cite{H65} says that a graph is $r$-orientable if and only if every subgraph has average degree at most $2r$. Thus $2r$-regular graphs are $r$-orientable, but $(2r+1)$-regular graphs are not. 
Our next theorem provides an analogous result for the model $\gout{k}$ where each vertex chooses $k$ random neighbors (see below for the formal definition). Again, this corresponds with the $r$-orientability ``threshold.''
\begin{theorem}
\label{thm:main-out}
For each fixed $r\ge 2$, there exists a constant $\gamma>0$ such that w.h.p. 
\begin{enumerate}
\item there exists an $r$-coloring of the edges of $\G_{n, r\textrm{-out}}$ such that the largest monochromatic component has order $o(n)$;
\item every $r$-coloring of the edges of $\G_{n, (r+1)\textrm{-out}}$ contains a monochromatic cycle of length at least $\gamma n.$
\end{enumerate}
\end{theorem}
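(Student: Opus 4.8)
The two parts are governed by the $r$-orientability dichotomy already highlighted for the regular model: $\gout{r}$ is $r$-orientable while $\gout{r+1}$ is not. Indeed, in the $k$-out model one may designate, for each edge, a \emph{chooser} endpoint (a vertex whose random choices produced that edge), breaking ties arbitrarily for mutually chosen pairs. Orienting every edge towards its designated chooser gives in-degree at most $r$ in $\gout{r}$, witnessing $r$-orientability; since $\gout{r+1}$ has $(r+1)n - O(1)$ edges and hence average degree tending to $2(r+1) > 2r$, Hakimi's criterion shows it is not. I would treat part (ii) first, as it is the cleaner of the two.

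For part (ii), the plan is a pigeonhole argument feeding into Theorem \ref{thm:k17}. First I would record that w.h.p.\ $\gout{r+1}$ has $m = (r+1)n - O(1)$ edges: the only loss from the $(r+1)n$ choices comes from mutually chosen pairs, of which there are $O(1)$ in expectation for fixed $r$. Any $r$-colouring therefore has a colour class $H$ with $e(H) \ge m/r \ge \of{1 + \tfrac1r}n - O(1)$, so $H$ spans at most $n$ vertices but has $\Omega(n)$ more edges than a spanning forest; in particular its average degree exceeds $2$ by a constant depending only on $r$. The remaining input to Theorem \ref{thm:k17} is local sparsity, which I would verify directly for $\gout{r+1}$ (and hence for every subgraph, including $H$): a set $U$ of $u$ vertices spans only the choices landing inside $U$, and a first-moment bound shows that w.h.p.\ every $U$ with $\abs{U} \le \beta n$ satisfies $e(U) \le \opoo \abs{U}$ for a suitable constant $\beta = \beta(r) > 0$. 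Applying Theorem \ref{thm:k17} to $H$ then produces a cycle of length at least $\gamma n$, which is monochromatic by construction.

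Part (i) is where the real difficulty lies. Using the orientation of out-degree $\le r$ from the chooser argument, colour the out-edges of each vertex with distinct colours from $\set{1,\dots,r}$; every colour class then has maximum out-degree $1$, i.e.\ is a pseudoforest. The naive version of this colouring fails badly: since each vertex sends its colour-$i$ edge to an essentially random choice, each colour class behaves like a random mapping and so has a component of order $\Theta(n)$. The crux is therefore to exploit the per-vertex freedom in assigning choices to colours so that \emph{every} colour class is shattered into components of order $o(n)$. I would mirror the construction used for Theorem \ref{thm:main}(i): first reduce the components of each pseudoforest to size $O(\log n)$ by recolouring a sublinear set of ``cut'' edges (a pseudoforest on $n$ vertices can be broken into pieces of size $\le s$ by deleting $O(n/s)$ edges), and then redistribute these $O(n/\log n) = o(n)$ recoloured edges among the other colours while controlling, via the local sparsity established above, that no insertion chains small components into a large one. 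The main obstacle is precisely this last balancing step: because $\gout{r}$ sits exactly at density $r$ (it has $\Omega(n)$ excess edges over a spanning tree and no orientability slack), one is fighting expander-percolation at the threshold, and showing that the recoloured edges can always be absorbed without recreating a giant monochromatic component is the heart of the argument. The model-specific ingredients it needs --- the $rn - O(1)$ edge count and the local sparsity of $\gout{r}$ --- are routine and parallel those used in part (ii).
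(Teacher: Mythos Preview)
Part~(ii) is correct and is essentially the paper's argument verbatim: a first-moment bound gives local sparsity for $\gout{r+1}$, pigeonhole gives a colour class with at least $(1+\tfrac1r)n - O(1)$ edges, and Theorem~\ref{thm:k17} finishes. (Write $c_2 = 1 + \tfrac{1}{2r}$ rather than $1+o(1)$; one needs a fixed constant in $(1, c_1)$, but this is cosmetic.)

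Part~(i) has a genuine gap. You correctly observe that assigning each vertex's $r$ out-edges distinct colours yields $r$ random functional digraphs, each with a giant component, and you propose to repair this by cutting every pseudoforest into $O(\log n)$-sized pieces and redistributing the $O(n/\log n)$ cut edges among the other colours. But you then flag this redistribution as ``the heart of the argument'' and do not carry it out: the only tools you invoke are the edge count and local sparsity, and local sparsity (every $|U| \le \beta n$ has $e(U) \le c_2|U|$) does not by itself stop $\Theta(n/\log n)$ specific edges from threading $O(\log n)$-sized pieces into a linear component. No mechanism --- deterministic or probabilistic --- is given for deciding which colour absorbs each cut edge, and that is exactly the missing idea. (Nor does this ``mirror'' the paper's proof of Theorem~\ref{thm:main}(i), which uses a Hamilton-cycle decomposition and a vertex partition, not pseudoforest cuts.)

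The paper's actual argument for part~(i) avoids redistribution altogether. It writes $\gout{r}$ contiguously as $D_1 + \cdots + D_r$ with the $D_i$ independent random functional digraphs, and uses $D_1$ \emph{alone} to build a vertex partition $V_1, \ldots, V_{n^{0.1}}$ into blocks of size $\approx n^{0.9}$ (decycle $D_1$, then iteratively prune large in-arborescences, removing at most $n^{0.2}$ arcs into a set $E^*$, and group the resulting small in-arborescences into blocks). Colour~1 is then all edges internal to some block; colour~$i \ge 2$ is the rest of $D_i$; the leftover $E^*$ is dumped into colour~2. Because $D_2, \ldots, D_r$ are \emph{independent} of the partition, a stopping-time calculation shows that the colour-$i$ out-walk from any vertex hits an intra-block step within $n^{0.15}$ steps w.h.p., and a branching bound then caps each colour-$i$ component at $n^{0.7}$ vertices. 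That independence is the engine of the proof, and your outline has no substitute for it.
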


In Sections \ref{sec:reg} and \ref{sec:out} we prove statement (i) of Theorems \ref{thm:main} and \ref{thm:main-out} respectively. In Section \ref{sec:lb}, we prove statement (ii) of Theorems \ref{thm:main} and \ref{thm:main-out}. We conclude in Section \ref{sec:con} with a discussion of a few open problems.

\subsection*{Definitions and Notation} See \cite{JLR} or \cite{Wor99} for details on random regular graphs. We use $\G(n,d)$ to refer to a graph drawn uniformly at random from all $r$-regular graphs on vertex set $[n]= \set{1,\ldots, n}$.  Further, we refer to two related models: $\G^*(n, d)$ and $\G'(n,d)$. In the \emph{configuration} or \emph{pairing model}, $\G^*(n,d)$, a set of $nd$ many \emph{configuration points} (assuming $nd$ is even) is partitioned into $n$ \emph{cells} of size $d$, each cell corresponding to a vertex of $[n]$. A perfect matching is placed on the set of configuration points and then each cell is contracted to a vertex resulting in $d$-regular multi-graph in which loops and multi-edges may appear. $\G'(n, d)$ is $\G^*(n,d)$ conditioned to have no loops. Any property which holds w.h.p.\ in $\G^*(n,d)$ or $\G'(n,d)$ also holds w.h.p.\ in $\G(n,d)$ (see Theorem 9.9 in \cite{JLR}).

See \cite{FK} or \cite{Bol} for details on random $k$-out graphs. For $1\le k \le n-1$, let 
$\dout{k}$ represent a random digraph on vertex set $[n]$ where each vertex independently chooses a set of $k$ out-neighbors uniformly at random from all $\binom{n-1}{k}$ choices.  $\gout{k}$ is a random (multi)graph obtained from $\dout{k}$ by ignoring the orientation of the arcs. 

Let $G$ be a (multi)(di)graph. We write $G=G_1+\cdots +G_\ell$ if i) all of $G,G_1,...,G_\ell$ are defined on the same vertex set, 
ii) each of $G_1,...,G_\ell$  is chosen independently and uniformly at random from a given set of graphs,
iii) $E=E(G)=E(G_1)\cup E(G_2) \cup \cdots \cup E(G_\ell)$. 

We omit floors and ceilings in certain places for ease of presentation. 

\section{Proof of Theorem \ref{thm:main} (i)}\label{sec:reg}

It is well known (see Theorem 9.43 of \cite{JLR}) that if we consider $G = H_1+H_2+\cdots + H_r$ where the $H_i$ are chosen from the set of all Hamilton cycles on vertex set $[n]$, then $G$ and $\G'(n, 2r)$ are mutually contiguous and so any property which holds w.h.p.\ in $G$ also holds w.h.p.\ in $\G(n, 2r)$. Thus the following theorem implies Theorem \ref{thm:main} (i).

\begin{theorem}
Let $r\geq 2$ be fixed and $G=H_1+H_2+\cdots +H_{r}$ be a (multi)graph on $[n]$ 
 where $H_1,...,H_r$ are chosen from the set of all Hamilton cycles on $[n]$. Then, w.h.p.\@ the  edges of $G$ can be $[r]$-colored such that for every $i\in [r]$ the largest component of the graph spanned by the edges of color $i$ has order at most $O(n^{0.7})$.
\end{theorem}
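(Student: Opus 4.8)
The plan is to bound the size of the largest monochromatic component by a first–moment estimate on monochromatic subtrees. The key reduction, valid for any coloring, is that if some color class has a component on at least $s$ vertices then that component contains a subtree on exactly $s$ vertices; hence it suffices to construct a coloring for which, with high probability, no color class contains a subtree on $s=Cn^{0.7}$ vertices (for a suitable constant $C$). Since the statement concerns $G=H_1+\cdots+H_r$, I would work directly with $G$ and exploit that the neighborhood of each vertex consists of its two neighbors in each of the $r$ random Hamilton cycles, so that the edge set has a clean product structure to compute against.

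To see what is needed, consider the expected number of monochromatic subtrees on $s$ vertices. A fixed tree on $s$ labeled vertices embeds in $G$ with probability about $(2r/n)^{s-1}$, since each of its $s-1$ edges must be realized by one of the $r$ cycles and, the tree being acyclic, these events are essentially independent. Multiplying the number of labeled trees $\binom{n}{s}s^{s-1}$ by this embedding probability and by the probability that an embedded tree is monochromatic, the combinatorial factors grow like $(2re)^{s}$; thus an oblivious coloring, which makes a fixed tree monochromatic with probability on the order of $(1/r)^{s-1}$, produces an expectation that tends to infinity. A workable coloring must therefore suppress monochromatic connectivity far below this oblivious value, and I expect the exponent $0.7$ to emerge from balancing the $(2re)^{s}$ growth of the tree count against the $(2r/n)^{s-1}$ embedding probability in the regime where the surviving monochromatic structures are near critical.

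The main obstacle, and the reason an oblivious or purely random coloring fails, is a density constraint: $G$ has $rn$ edges distributed among $r$ colors, so some color class has average degree at least $2$ and, if colored without regard to structure, behaves like a supercritical random graph and with high probability carries a component of order $\Theta(n)$, dwarfing $n^{0.7}$. The whole difficulty is to keep every color class subcritical in spite of average degree $2$, which is possible only if each class is built along the cycles rather than spread across the graph. The construction I would pursue is an arc decomposition: cut each $H_j$ into arcs and assign colors so that arcs meeting at a common vertex receive distinct colors wherever possible, so that each color class is a disjoint union of arcs that meet only at a sparse set of junction vertices. Monochromatic components then correspond to connected clusters in an auxiliary junction graph whose vertices are arcs and whose edges record forced same-color intersections. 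The step I expect to be hardest is proving that, for random Hamilton cycles and a well-chosen arc length, this junction graph does not percolate — that is, it is subcritical or only critical — so that its clusters, rescaled by the arc length, remain of order at most $n^{0.7}$ with high probability; controlling the cross-cycle intersections of equally colored arcs, and showing that the junction structure fails to develop a giant cluster, is the technical heart of the argument.
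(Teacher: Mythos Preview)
Your proposal identifies the right obstruction --- each color class has average degree exactly $2$, so a symmetric or random assignment is critical and will produce a giant component --- but it stops short of an actual coloring and the ``junction graph does not percolate'' step is neither specified nor proved. As written this is a plan with the hard part left open, not a proof.

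The paper's argument avoids the percolation analysis entirely by a simple asymmetric construction that you are missing. Rather than treating the $r$ colors on equal footing and cutting every cycle into arcs, fix $H_1$ and use it to partition $[n]$ into $n^{0.3}$ consecutive intervals $V_1,\ldots,V_{n^{0.3}}$ of size $n^{0.7}$ along $H_1$. Now assign color $1$ to \emph{every} edge of $G$ (from any $H_i$) whose endpoints lie in the same block $V_j$, and for $i\ge 2$ assign color $i$ to the edges of $H_i$ that cross between blocks; the $n^{0.3}$ block-boundary edges of $H_1$ are dumped into color $2$. Color $1$ components are trapped inside single blocks, hence have order at most $n^{0.7}$. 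For $i\ge 2$, the color-$i$ edges are the crossing edges of a single random Hamilton cycle $H_i$: walking along $H_i$, each step lands in the same block as the previous vertex with probability roughly $n^{-0.3}$, so a run of $n^{0.4}$ consecutive crossing edges has probability $(1-\Theta(n^{-0.3}))^{n^{0.4}}=e^{-\Theta(n^{0.1})}$, and a union bound gives that every color-$i$ path has length at most $n^{0.4}$. Color $2$ picks up at most $n^{0.3}$ extra edges, so its components have order $O(n^{0.3}\cdot n^{0.4})=O(n^{0.7})$.

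The point you should take away is that the density constraint is defused not by a clever symmetric arc-coloring but by letting one color absorb all the ``local'' edges: color $1$ then has a large edge set but is geometrically confined, while each remaining color sees only the sparse crossing skeleton of a single cycle, which is genuinely subcritical. Your first-moment-on-trees framework and the junction-graph percolation question are both unnecessary once this asymmetry is in place.
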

\begin{proof}

We reveal $H_1$ and we relabel our vertices such that $E(H_1)=\{\set{v_{i},v_{i+1}}:i\in [n]\}$ (we identify $v_1$ with $v_{n+1}$). For $i\in [n^{0.3}]$ set 
$$V_i=\big\{v_j:\lfloor(i-1)n^{0.7}\rfloor+1\leq j \leq \lfloor in^{0.7}\rfloor\big\}.$$
 Furthermore we define the edge sets $E^*=\big\{  \set{v_{\lfloor in^{0.7}\rfloor},v_{\lfloor in^{0.7}\rfloor+1} }:i\in[n^{0.3}]\big\},$ $E_1=\{\set{u,v}\in E: u,v\in V_i \text{ for some } i\in [n^{0.3}] \},$ and for $i\in[2,r]$ we set 
$E_i=E(H_i)\setminus E_1$.
\vspace{3mm}
\\We now implement the following coloring: for $i\in [r]$ we color the edges in $E_i$ by color $i$. Additionally we color the edges in $E^*$ by color 2.
\begin{claim}\label{shortpaths}
With probability $1- o(1)$ there does not exist $2\leq i\leq r$ such that $E_i$ spans a path of length larger than $n^{0.4}$.
\end{claim}
\begin{proof}
Fix $2\leq i\leq r$ and $v\in V.$ Furthermore let $\sigma = \sigma_i$ be one of the two permutations associated with $H_i$. We consider  the exploration of the path $v,\sigma(v),\sigma^2(v),\ldots$ induced by $H_i$ executed as follows.
For $t\in \mathbb{N}$ given the path $v,\sigma(v),\cdots, \sigma^{t-1}(v)$ 
we query  $\sigma^{t}(v)$. 
We define the stopping time of the exploration to be $\tau=\min\big\{t:\set{\sigma^{t-1}(v), \sigma^{t}(v)}\in E_1\big\}$.
\vspace{3mm}
\\ Now let $1\le t\leq n^{0.4}$ and let $z_t\in [n^{0.3}] $ be such that $\sigma^{t-1}(v)\in V_{z_t}$. Furthermore assume that we have 
not stopped the exploration of the path at time $t-1$ i.e.\@ $\tau>t-1$. 
Then  $\sigma^{t}(v)$ is uniformly distributed in $V\setminus\{v,\sigma(v),\ldots, \sigma^{t-1}(v)\}$. Thus 
 $\sigma^{t}(v)\in V_{z_t}$ with probability at least $\frac{|V_{z_t}|-t}{n-t}\geq \frac{0.9n^{0.7}}{n}.$ Therefore
$$ \Prob(\tau> n^{0.4})=\prod_{j=1}^{n^{0.4}}\Prob(\tau>j | \tau>j-1) 
                     \leq \prod_{j=1}^{n^{0.4}} \bigg(1-\frac{0.9}{n^{0.3}}\bigg)  \leq e^{-n^{0.4}\cdot\frac{0.9}{n^{0.3}}}=e^{-0.9n^{0.1}}.$$
Thus the probability that the subpath of $v,\sigma(v),\sigma^2(v),\ldots$  that is incident to vertex $v$ and is induced by  the edges in $E_i$ is larger than $n^{0.4}$ is less than                          
$e^{-0.9n^{0.1}} = o(1/n^2)$. Taking a union bound over all $2\le i\le r$ and $v\in V$, we find that every segment of length $n^{0.4}$ of each $H_i$ contains an edge of $E_1$. Thus each $E_i, i\ge 2$ consists of disjoint paths of length less than $n^{0.4}$
\end{proof}
Observe that the largest component of the graph spanned by $E_1$ is spanned by some $V_j$, 
$j\in[n^{0.3}]$ and therefore it has size $\lceil n^{0.7} \rceil$. For 
$2\leq i\leq r$,  $E_i$ is the union of vertex disjoint paths each of which w.h.p.\@ has length at most $n^{0.4}$ (see Claim \ref{shortpaths}). Thus the largest component spanned by color $i$, for  $3\leq i\leq r$ is of size at most $n^{0.4}$. 
Finally we colored by color 2 the edges in $E_2\cup E^*$. Hence, since $|E^*|=n^{0.3}$ and any component spanned 
by $E_2$ has size at most $n^{0.4}$ we have that the largest component spanned by the edges of color $2$ has w.h.p.\@ size at most
$(n^{0.3}+1)  \cdot n^{0.4}.$

\end{proof}

\section{Proof of Theorem \ref{thm:main-out} (i)} \label{sec:out}
Let $D$ be the directed graph $D=D_1+D_2+...+D_k$ where $D_i$ is chosen from all directed graphs on $V$ where every $v\in V$ has out-degree 1 (we forbid loops) and $|V|=n$. 
 Then with probability bounded away from zero, every vertex in $D$ has $k$ distinct neighbors and we obtain $\dout{k}$. Let $G$ be the (multi)graph obtained by ignoring the orientation of the arcs in $D$. Then we have that $\gout{k}$ is contiguous with respect to $G$, i.e. every statement which holds w.h.p.\ in $G$ also holds w.h.p.\ in $\gout{k}$. Thus the following theorem implies Theorem \ref{thm:main-out} (i). 
\begin{theorem}
Let $r\ge 2$ be fixed and let $D=D_1+D_2+...+D_r$ be a (multi)digraph where $D_i$ is chosen from all directed graphs on $[n]$ where every $v\in[n]$ has out-degree 1. Let $G$ be the (multi)graph obtained by ignoring the orientations of the arcs in $D$. Then, w.h.p.\@ the  edges of $G$ can be $[r]$-colored such that for
every $i\in [r]$ the largest component of the graph spanned by the edges of color $i$ has order at most $O(n^{0.9})$.
\end{theorem}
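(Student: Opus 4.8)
The plan is to follow the strategy of the regular case as closely as possible. Writing $f_i$ for the (loop-free) out-map of $D_i$, I would first reveal $D_1$ and use it to partition $[n]$ into blocks $V_1,\dots,V_m$ of size at most $s:=\ceil{n^{0.9}}$, colour every edge of $G$ having both endpoints in a common block by colour $1$, colour the cross-block edges of $D_i$ by colour $i$ for each $2\le i\le r$, and finally colour the (few) cross-block edges of $D_1$ by colour $2$. Since colour $1$ is confined to the blocks, each of its components lies inside a single $V_j$ and so has order at most $s=O(n^{0.9})$, exactly as in Claim~\ref{shortpaths}. The whole difficulty is thus to show that the cross-block edges of each $D_i$ span only small components, and that $D_1$ contributes only $O(m)$ cross-block edges so that absorbing them into colour $2$ is harmless.

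The one genuinely new structural point is that $D_1$ is not a single Hamilton cycle, so the naive interval partition is unavailable. Instead I would exploit that $D_1$ is a functional digraph: each weak component is a directed cycle with in-trees attached, so deleting one edge from each of its $O(\log n)$ cycles leaves a spanning in-forest $F_1$. Since the maximum degree of $F_1$ is $O(\log n/\log\log n)\ll s$ w.h.p., a routine greedy tree-partition (repeatedly splitting off a subtree whose size lies in $[s/2,s]$) breaks $F_1$ into $m=O(n/s)=O(n^{0.1})$ connected blocks, each of order at most $s$, cutting only $O(n^{0.1})$ edges. Taking $V_1,\dots,V_m$ to be these blocks, the set of $D_1$-edges not internal to a block (the $O(n^{0.1})$ cut edges together with the $O(\log n)$ deleted cycle edges) has size $O(n^{0.1})$, so colouring all of them by colour $2$ can merge at most $O(n^{0.1})$ of the colour-$2$ components described below.

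The heart of the proof is the analogue of Claim~\ref{shortpaths}: for fixed $2\le i\le r$, every component of the graph $G_i$ spanned by the cross-block edges of $D_i$ is small. In $G_i$ every vertex has out-degree at most $1$ (its arc $v\to f_i(v)$ survives only if $f_i(v)$ lies in a foreign block), so each component is an in-tree rooted at the unique vertex whose $f_i$-image stays inside its own block, possibly together with one short cross-block cycle of $D_i$. To bound such a component I would use a two-phase exploration from an arbitrary vertex $v$. The \emph{forward} phase follows $v,f_i(v),f_i^2(v),\dots$; since $f_i(x)$ is uniform on $[n]\sm\{x\}$, each step stays inside the current block with probability $(s-1)/(n-1)\ge\tfrac12 n^{-0.1}$, so \emph{verbatim} as in Claim~\ref{shortpaths} the chain meets such a within-block arc (or closes a cycle) within $O(n^{0.1}\log n)$ steps w.h.p., uniformly over all starting vertices. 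The \emph{backward} phase explores the in-neighbours of the revealed vertices: the number of new cross-block in-neighbours of a vertex has mean $1-s/n=1-n^{-0.1}<1$ and is dominated by a $\mathrm{Poisson}(1-s/n)$ variable, so the backward exploration is governed by a \emph{subcritical} Galton--Watson process whose total progeny has an exponential tail truncating near $(n/s)^2=n^{0.2}$. A union bound over the at most $n$ possible roots then shows that w.h.p.\ every component of every $G_i$ has order $O(n^{0.2}\log^2 n)=o(n^{0.9})$.

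Finally I would assemble the bounds: colour $1$ gives components of order at most $s=O(n^{0.9})$; for $3\le i\le r$ colour $i$ is exactly $G_i$, with components of order $o(n^{0.9})$; and colour $2$ is $G_2$ together with the $O(n^{0.1})$ cross-block edges of $D_1$, so its components have order at most $O(n^{0.1})\cdot O(n^{0.2}\log^2 n)=o(n^{0.9})$. Hence every monochromatic component has order $O(n^{0.9})$. The main obstacle is the third step: unlike the Hamilton-cycle case, where bounded degree forces the cross-block edges to form \emph{paths} so that a single forward exploration suffices, here the cross-block edges form branching in-trees, and the crux is to certify that the induced branching is subcritical (mean $1-n^{-0.1}<1$) so that these trees, and with them the monochromatic components, remain of sublinear size. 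Controlling the rare fully-cross-block cycles of $D_i$ and the mild correlations introduced by the block membership are the remaining technical points to be verified.
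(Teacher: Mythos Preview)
Your overall strategy coincides with the paper's: build a block partition $V_1,\dots,V_m$ from the functional digraph $D_1$ by deleting the $O(\log n)$ cycle arcs and then cutting the resulting in-forest into pieces of size $\Theta(n^{0.9})$, colour all within-block edges~$1$, colour the cross-block edges of $D_i$ with colour $i$ for $i\ge 2$, and dump the few leftover $D_1$-arcs into colour~$2$. The paper carries out the partition by repeatedly peeling off sub-arborescences of size at most $n^{0.85}$ (removing the in-arcs of a suitably chosen vertex, so at most $\log n$ arcs per step) and then grouping these pieces consecutively into blocks of size $(1+o(1))n^{0.9}$; your greedy subtree-splitting achieves the same effect, though note that with max degree $\Delta=\Theta(\log n/\log\log n)$ the naive ``split off a subtree in $[s/2,s]$'' does not quite work as stated---you get subtrees of size $\Theta(s/\Delta)$ and hence $O(n^{0.1}\Delta)$ cuts, which is still comfortably $o(n^{0.2})$.

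The one substantive methodological difference is how you bound the cross-block components of $D_i$. You argue that the backward exploration is a \emph{subcritical} branching process with offspring mean $1-\Theta(n^{-0.1})$, whose total progeny has an exponential tail at scale $n^{0.2}$; this is correct (and in fact makes your forward phase redundant, since the backward exploration from any root or cycle vertex already spans the whole component), but it hinges on every block having size $\Omega(n^{0.9})$ and requires some care with the conditioning as the ``current block'' changes along the exploration. The paper avoids both issues by a two-step argument that decouples the roles of $E_i$ and $D_i$: first, exactly as in your forward phase, the cross-block walk in $E_i$ shows the in-arborescence has \emph{height} at most $n^{0.15}$; second, a separate claim bounds the \emph{width} by showing, via Chernoff, that in the unrestricted $D_i$ every BFS level of an in-arborescence of height $\le n^{0.15}$ has at most $O(n^{0.51})$ vertices, giving total order $\le n^{0.7}$. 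The paper's width bound works in the critical process $D_i$ itself (no subcriticality needed, no dependence on block sizes), which makes the conditioning clean; your subcritical-GW route is more direct and gives a slightly sharper component bound, at the cost of the block-size and conditioning caveats you flag at the end.
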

\begin{proof}

We start 
by constructing  a partition of $V$ into $(1+o(1))n^{0.1}$ sets each of size $(1+o(1))n^{0.9}$ and finding a set $E^*\subset E(D_1)$ of size at most $n^{0.2}$ such that if an arc in $E(D_1)$ has its endpoints in different sets of the partition then it belongs to $E^*$. 
\vspace{3mm}
\\We construct the partition and $E^*$ as follows. Remove one arc from each cycle of $D_1$, and add it to $E^*$.
Observe that in expectation $D_1$ has $O(\log n)$ cycles (see e.g. Section 14.5 of \cite{Bol}) hence w.h.p.\ we have added at most $n^{0.1}$ arcs to $E^*$. At the same time the removal of those arcs turns $D_1$ into the union of vertex disjoint  in-arborescences. Henceforward we implement the following algorithm
\vspace{3mm}
\\While $D_1$ contains an in-arborescence of order larger than $n^{0.85}$:
\begin{itemize}
\item Pick a vertex $v$ such that in $D_1$, $v$ is reachable by at least $n^{0.85}$ vertices but none of $v$'s in-neighbors have this property.
\item  Remove every in-arc of $v$ from $D_1$ and add it to $E^*$.
\end{itemize}
The maximum in-degree of $D_1$ is w.h.p.\@ less than $\log n$. Therefore at every iteration we add to $E^*$ at most $\log n$ arcs. Moreover after each iteration at least $n^{0.85}$ additional vertices are spanned by in-arborescences of size at most $n^{0.85}$. Therefore there are at most $n^{0.15}$ iterations and w.h.p.\@ $|E^*|\leq n^{0.15}\log n+n^{0.1}<n^{0.2}.$
\vspace{3mm}
\\The removal of $E^*$ breaks $D_1$ into in-arborescences $A_1,...,A_\ell$ each of size at most $n^{0.85}$. For $1\leq j< n^{0.1}$ define $h_j=\min\{i\in[\ell]: |\cup_{d\leq i} V(A_d)|\geq j n^{0.9}\}$. Also set $h_0=0$ and $h_{n^{0.1}}=\ell$.
 Partition $V$ into $V_1,...,V_{n^{0.1}}$, where $V_j= \cup_{h_{j-1}< b \leq{h_j}}V(A_b).$ Finally define $E_1=\{(u,v)\in E(D): u,v\in V_j \text{ for some } j\in [n^{0.1}]\},$ and for $i\in[2,r]$ we set
$E_i=E(D_i)\setminus E_1$.
\vspace{3mm}
\\We now implement the following coloring of the edges of $G$: for $i\in [r]$ we color the edges of $G_k$ obtained from $E_i$ by color $i$. Additionally we color the edges obtained from $E^*$ by color 2.
\begin{claim}\label{shortpaths-out}
With probability $1- o(1)$ there does not exist $2\leq i\leq r$ such that $E_i$ spans a path of length larger than $n^{0.15}$.
\end{claim}
\begin{proof}
Fix $2\leq i\leq r$ and $v\in V.$ Furthermore let $f = f_i$ be a function $f:V \to V$ such that 
$E(D_i)=\set{(v,f(v)):v\in V }$. We consider  the exploration of the walk $v,f(v),f^2(v),\ldots$ induced by $E_i$ executed as follows.
For $t\in \mathbb{N}$ given the walk $v,f(v),\cdots, f^{t-1}(v)$ 
we query  $f^{t}(v)$. 
We define the stopping time of the exploration to be \[\tau=\min\big\{t\,:\,(f^{t-1}(v), f^{t}(v))\in E_1 \quad\textrm{ or } \quad f^t(v) = f^i(v) \textrm{ for some $i<t$} \big\}.   \]
\\ Now let $1\le t\leq n^{0.15}$ and let $z_t\in [n^{0.1}] $ be such that $f^{t-1}(v)\in V_{z_t}$. Furthermore assume that we have 
not stopped the exploration of the walk at time $t-1$ i.e.\@ $\tau>t-1$. 
Then  $f^{t}(v)$ has not yet been exposed and is uniformly distributed in $V\setminus\{ f^{t-1}(v)\}$. Thus 
 $f^{t}(v)\in V_{z_t}$ with probability $\frac{|V_{z_t}|-1}{n-1}\geq \frac{0.9n^{0.9}}{n}.$ Therefore
$$ \Prob(\tau> n^{0.15})=\prod_{j=1}^{n^{0.15}}\Prob(\tau>j | \tau>j-1) 
                     \leq \prod_{j=1}^{n^{0.15}} \bigg(1-\frac{0.9}{n^{0.1}}\bigg)  \leq e^{-n^{0.15}\cdot\frac{0.9}{n^{0.1}}}\leq \frac{1}{n^2}.$$
Thus the probability that the sub-walk of $v,f(v),f^2(v),\ldots$ incident to $v$ and induced by $E_i$    has length larger than $n^{0.15}$ is less than                      
$1/n^2$. Taking a union bound over all $2\le i\le k$ and $v\in V$ the claim follows.
\end{proof}
\begin{claim}\label{size}
With probability $1- o(1)$ there does not exist $2\leq i\leq r$ such that $E(D_i)$ spans an in-arborescence of height at most $n^{0.15}$ and order greater than $n^{0.7}$.
\end{claim}
\begin{proof}
Let $2\le i \le r$, $v\in V$. We explore the in-arborescence rooted at $v$ in $D_i$ using breadth-first search. 
 For $j\ge 0$ let $\ell_j$ be the number of vertices at level $j$ (where level 0 contains only $v$). Furthermore let $t_j = \sum_{0\le i\le j}\ell_i$.
 Then, given $\ell_0,...,\ell_h$ we have that $\ell_{h+1}$ is distributed as $\Bin\of{n-t_h,\frac{\ell_h}{n-t_{h-1}}}$ i.e a binomial random variable with $n-t_{h}$ trials and probability of success $\ell_h/(n-t_{h-1})$.
 Hence $\ell_{h+1}$ is dominated by  
$\Bin(n_h,\ell_h/n_h)$ where $n_h = n - t_{h-1}$.
 Observe that using the Chernoff bound, 
 i.e. $\Prob\big[\Bin(n,p)\geq (1+\epsilon)np   \big]\leq \exp\{-\epsilon^2np/3\}$ (see e.g. \cite{JLR}), for any $h\in [n]$
  we have 
 $$\Prob\big[\ell_{h+1}>2n^{0.51}\,|\, \ell_h<n^{0.51}\big]\leq \Pr{\Bin\of{n_h,\frac{n^{0.51}}{n_h}}>2n^{0.51}}
\leq\exp \{-n^{0.51}/3\}. $$
Furthermore 
\begin{align*}
\Prob\big[\ell_{h+1}>\big(1+{n^{-0.25}}\big)\ell_h \,\vert\, \ell_h>n^{0.51}\big]&\leq 
 \Prob\bigg[\ell_{h+1}>\bigg(1+\frac{n^{0.05}}{\ell^{0.5}_h }\bigg)\ell_h  \,\bigg\vert\, \ell_h>n^{0.51}\bigg]\\
 &\leq \Prob\bigg[\Bin(n_h,\ell_h/n_h)>  \bigg(1+\frac{n^{0.05}}{\ell^{0.5}_h }\bigg)\ell_h \bigg]
\leq\exp\{-n^{0.1}/3 \}.
\end{align*}
Therefore with probability at least $1-o(1/n)$ for every $h\in [n^{0.15}]$  we have that $\ell_h\leq (1+n^{-0.25})^h 2n^{0.51}\leq 4n^{0.51}$. 
Thus with probability at least $1-o(1/n)$ in $D_i$ any in-arborescence of height at most $n^{0.15}$ rooted at $v$ spans less than $n^{0.15}\cdot 4n^{0.51}\leq
n^{0.7}$ vertices.
 By taking a union bound over all $2\le i\le r$ and $v\in V$ the claim follows.
\end{proof}
Now observe that the largest component of the graph spanned by the edges obtained from $E_1$ is contained in
some $V_j$, $j\in[n^{0.1}]$ and therefore the largest component of color 1 has size at most $(1+o(1))n^{0.9}$.
Now consider $E_i$ for $2\le i \le r$. Each component of the digraph induced by $E_i$ is either an in-arborescence or unicyclic in which case we can view it as an in-arborescence plus an edge. By Claim \ref{shortpaths-out}, each such component (viewed as an in-arborescence) has height at most $n^{0.15}$ and thus by Claim \ref{size} has order at most $n^{0.7}$.
Thus the largest component spanned  by color $i$, $3\leq i\leq r$, is of order at most $n^{0.7}$.
Finally the largest component spanned by edges obtained from $E_2$ is of size $n^{0.7}$ and we have
$|E^*|\leq n^{0.2}$. 
Therefore the largest component spanned by color 2 is of order $O(n^{0.9})$.
\end{proof}

\section{Proof of Theorem \ref{thm:main} (ii) and Theorem \ref{thm:main-out} (ii)} \label{sec:lb}

This section follows closely the recent paper of Krivelevich \cite{K17}. We begin by stating a Theorem from \cite{K17} which we will use. This theorem says that graphs with decent global density, but relatively smaller local density must contain a long cycle.
\begin{theorem}[Theorem 2 of \cite{K17}]\label{thm:k17}
Suppose reals $c_1 > c_2 >1$ and a positive integer $k$ satisfy $(\frac{k}{2} -1)\of{\bfrac{c_1}{c_2}^{1/2} -1} \ge 2$. Let $G=(V,E)$ be a graph on at least $k$ vertices satisfying
\begin{enumerate}
\item $|E(G)| \ge c_1 |V|$

\item every subset $S\subseteq V$ of size $|S| \le k$ satisfies $e(S) \le c_2|S|$.

\end{enumerate}
Then $G$ contains a cycle of length at least $(\frac{k}{2} -1)\of{\bfrac{c_1}{c_2}^{1/2} -1}.$
\end{theorem}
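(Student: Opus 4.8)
The plan is to argue by contradiction and to split the work into two phases: first produce a long simple path, then close a large fraction of it into a cycle. Suppose $G$ has no cycle of length at least $L:=\of{\frac k2-1}\of{\bfrac{c_1}{c_2}^{1/2}-1}$. The global density hypothesis $\abs{E}\ge c_1\abs{V}$ means the average degree is at least $2c_1$, so by the standard peeling argument (repeatedly deleting vertices of degree below $c_1$) I may pass to a non-empty subgraph $H\subseteq G$ with minimum degree at least $c_1>1$; note $H$ inherits the local sparsity $e(S)\le c_2\abs{S}$ for all $\abs{S}\le k$. Working inside $H$ is convenient because every endpoint of a longest path has all of its (at least $c_1$) neighbours on that path.

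Next I would run a depth-first search on $H$ (or equivalently analyse a longest path together with Pósa rotations). The absence of a cycle of length at least $L$ forces every back edge to be \emph{short-range}, joining a vertex to an ancestor at tree-distance at most $L-2$; otherwise the corresponding tree path plus the back edge would be a cycle of length at least $L$. I would then play the two hypotheses against each other along the DFS tree: the density of $H$ guarantees many back edges, while applying local sparsity to a window of consecutive tree levels caps how many edges such a window can carry. The quantitative balance between the surplus of edges forced by $c_1$ and the deficit imposed by $c_2$ on each window is what both forces a root-to-leaf path of length $\Omega(k)$ and lets the short-range back edges along it close up into a cycle; the free parameter is the window length, and optimising it is what should produce the leading term $\frac k2$ and the factor $\bfrac{c_1}{c_2}^{1/2}-1$. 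The role of the hypothesis $\of{\frac k2-1}\of{\bfrac{c_1}{c_2}^{1/2}-1}\ge 2$ is merely to make $L\ge 2$, so that the conclusion is not vacuous.

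The hard part will be extracting the precise \emph{geometric-mean} factor $\bfrac{c_1}{c_2}^{1/2}$ rather than a cruder constant. The naive expansion argument — grow balls in $H$ and use minimum degree $c_1$ against local sparsity $c_2$ — double-counts intra-ball edges in the degree sum and therefore only yields expansion when $c_1>2c_2$. This is fatal here, because the regime that matters (and the one arising in the application, where $c_1=1+\frac{1}{2r}$ and $c_2$ is just above $1$) is $c_2<c_1<2c_2$, where every such crude bound is vacuous. Avoiding the lost factor of $2$ is exactly where the care is needed: one must account for the edges segment-by-segment along the DFS tree and tune the window length so that the product of the attainable depth and the per-level density surplus is maximised, the optimum of that expression being what introduces the square root. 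Getting this optimisation, and the matching cycle-closing step, to deliver the stated constant is the delicate point; the DFS skeleton and the two edge-counting inequalities are comparatively routine.
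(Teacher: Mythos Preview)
The paper does not prove this statement. Theorem~\ref{thm:k17} is quoted from Krivelevich~\cite{K17} and invoked as a black box in the proofs of Theorem~\ref{thm:main}(ii) and Theorem~\ref{thm:main-out}(ii); the surrounding text says explicitly ``We begin by stating a Theorem from~\cite{K17} which we will use.'' There is therefore no proof in this paper to compare your proposal against.

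For what it is worth, your outline is broadly the shape of Krivelevich's own argument in~\cite{K17}: pass to a subgraph of minimum degree at least $c_1$, run DFS so that every edge joins a vertex to an ancestor, and then exploit the tension between the global edge count and the local sparsity bound along ancestor paths to force a long path and then a long cycle. Your diagnosis that the naive expansion bound loses a factor of~$2$ and is therefore useless in the regime $c_2<c_1<2c_2$ is also correct and is exactly why the DFS structure (edges only to ancestors, so one can count along a single path rather than a ball) is the right tool. That said, what you have written is a plan rather than a proof: the step you yourself flag as ``the hard part''---carrying out the counting so as to produce the precise factor $(c_1/c_2)^{1/2}-1$ and the leading $k/2$---is left entirely open, so the proposal cannot be assessed as correct or incorrect, only as pointed in the right direction.
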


The following lemma (whose proof is almost identical to that of Proposition 3 of \cite{K17}) verifies that random regular graphs satisfy the ``local sparseness'' condition (ii) of Theorem \ref{thm:k17}

\begin{lemma}\label{lem:localsparse}
Let $d\ge 2$ be an integer, let $d>c>1$ and let  $\delta =\of{\frac{1}{3}\cdot \frac{c^c}{e^{1+c}d^c}}^{\frac{1}{c-1}}$. Then, w.h.p. \@ every subset $S$ of $\G(n,d)$ of size $|S|\le \delta n$ satisfies $e(S) \le c|S|$.
\end{lemma}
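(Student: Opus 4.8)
The plan is to bound the expected number of ``dense'' subsets via a first-moment (union bound) argument, exactly in the spirit of classical threshold calculations for random (regular) graphs. Specifically, for each $s \le \delta n$ I want to show that the expected number of vertex subsets $S$ of size $s$ spanning at least $cs$ edges tends to $0$ when summed over all $s$. The natural quantity to estimate is
\[
\E\bigl[\#\{S : |S| = s,\ e(S) \ge cs\}\bigr] \le \binom{n}{s}\binom{\binom{s}{2}}{cs}\,\Pr\bigl[\text{a fixed set of } cs \text{ pairs all appear as edges}\bigr].
\]
Here the first binomial counts the choice of $S$, the second counts the choice of which $cs$ pairs inside $S$ are the edges, and the last factor is the probability that a prescribed set of $cs$ pairs all lie in $\G(n,d)$.

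First I would work in the configuration model $\G^*(n,d)$ (invoking the contiguity statement from the Definitions section, so that a w.h.p.\ conclusion there transfers to $\G(n,d)$). In that model the probability that $cs$ specified independent pairs all appear can be bounded cleanly: each additional required edge imposes a matching constraint on configuration points, and the probability that $m$ prescribed disjoint pairs are all matched is at most $\prod_{j=0}^{m-1}\frac{1}{nd - 2j - 1}$, which for $m = cs = O(\delta n)$ is at most $(1+o(1))\bigl(\tfrac{1}{nd}\bigr)^{cs}$ (since $s \le \delta n$ keeps $2j$ well below $nd$). Combining this with the standard estimates $\binom{n}{s} \le \bigl(\tfrac{en}{s}\bigr)^s$ and $\binom{\binom{s}{2}}{cs} \le \bigl(\tfrac{e\binom{s}{2}}{cs}\bigr)^{cs} \le \bigl(\tfrac{es}{2c}\bigr)^{cs}$ gives a bound of the form
\[
\binom{n}{s}\binom{\binom{s}{2}}{cs}\bigl(\tfrac{1}{nd}\bigr)^{cs}\cdot(1+o(1)) \le \Bigl[\frac{en}{s}\cdot\Bigl(\frac{es}{2c}\cdot\frac{1}{nd}\Bigr)^{c}\Bigr]^{s}\cdot(1+o(1)) = \Bigl[\frac{e^{1+c}d^{c}}{(2c)^{c}}\cdot\Bigl(\frac{s}{n}\Bigr)^{c-1}\Bigr]^{s}\cdot(1+o(1)),
\]
up to benign constant factors (a factor of $2^c$ from replacing $\binom{s}{2}$ by $s^2/2$ can be absorbed and is exactly where the $c^c$ versus $(2c)^c$ discrepancy in the stated $\delta$ is reconciled). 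The bracketed base is an increasing function of $s/n$, so for $s \le \delta n$ it is maximized at $s = \delta n$, and the choice $\delta = \bigl(\tfrac{1}{3}\cdot\tfrac{c^c}{e^{1+c}d^c}\bigr)^{1/(c-1)}$ is precisely what forces this base below a constant strictly less than $1$ (the factor $\tfrac13$ provides room to beat $1$ comfortably).

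Having shown the base is at most some $\rho < 1$ for all $s$ in the relevant range, I would split the sum over $s$ into two regimes. For large $s$ (say $s = \Omega(\log n)$) the term $\rho^{s}$ is super-polynomially small and the sum is $o(1)$. For small $s$ one must be slightly more careful because the bracketed base can exceed $1$ when $s$ is tiny (the $(s/n)^{c-1}$ factor is then very small, but the combinatorial prefactors can dominate); however for small $s$ the set $S$ cannot span $cs$ edges at all once $cs > \binom{s}{2}$, i.e.\ for $s \le 2c+1$ the event is impossible, and for the finitely-many-up-to-constants intermediate values one checks directly that $\bigl(\tfrac{s}{n}\bigr)^{c-1}$ kills the term. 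Summing both regimes yields $o(1)$, so by Markov's inequality w.h.p.\ no such $S$ exists, which is the claim.

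The main obstacle I anticipate is the bookkeeping at the \emph{small-$s$ end} of the sum rather than any conceptual difficulty: the first-moment base is not uniformly below $1$, so one cannot simply sum a geometric series, and the argument must handle the transition where $(s/n)^{c-1}$ crosses over the combinatorial prefactor. A clean way to sidestep this is to note $e(S) \le c|S|$ is automatic whenever $|S|$ is bounded (since $e(S) \le \binom{|S|}{2}$), so it suffices to prove the bound for $s \ge s_0$ for a suitable constant $s_0 = s_0(c,d)$, on which range the base is genuinely $\le \rho < 1$ and the sum is a convergent geometric tail. A secondary technical point is ensuring the configuration-model edge-probability estimate is valid uniformly for $cs$ up to $\Theta(\delta n)$ pairs, which is fine as long as $2cs = o(nd)$; since $c < d$ and $\delta < 1$ this holds, and the resulting $(1+o(1))$ factor is harmless.
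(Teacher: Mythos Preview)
Your approach is essentially the paper's: a first-moment (union-bound) calculation in the configuration model $\G^*(n,d)$, with the sum over $|S|$ split at roughly $\log n$. The paper's counting is slightly different --- rather than choosing $cs$ vertex pairs inside $S$ and bounding the probability they are all edges, it directly selects $ck$ configuration points from the $dk$ points of $S$ and bounds by $k/n$ the probability that each matches back into $S$, giving $\binom{n}{k}\binom{dk}{ck}(k/n)^{ck}$ and then exactly the base $\frac{e^{1+c}d^c}{c^c}(k/n)^{c-1}$ you are aiming for.

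One correction worth flagging: your bound $\bigl(\tfrac{1}{nd}\bigr)^{cs}$ is the probability that $cs$ prescribed \emph{configuration-point} pairs are all matched, not that $cs$ prescribed \emph{vertex} pairs are all edges. A given vertex pair $\{u,v\}$ is an edge of $\G^*(n,d)$ with probability $\approx d^2/(nd)=d/n$, since any of the $d^2$ point pairs can realise it; so the correct factor is $(d/n)^{cs}$. Your displayed algebra also slips --- carrying out $\tfrac{en}{s}\bigl(\tfrac{es}{2c}\cdot\tfrac{1}{nd}\bigr)^{c}$ yields $\frac{e^{1+c}}{(2c)^{c}d^{c}}\bigl(\tfrac{s}{n}\bigr)^{c-1}$, with $d^{c}$ in the denominator --- and these two errors happen to cancel, which is why your final expression is correct. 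With the right probability $(d/n)^{cs}$ the computation is clean and matches the paper's.
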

\begin{proof}

The probability that there is a subset $S$ in $\G^*(n,d)$ with $|S|\le \d n$ and $e(S) > ck$ is at most 
\begin{align*}
\sum_{k\le \d n}\binom{n}{k}\binom{dk}{ck}\bfrac{dk}{dn}^{ck} &\le \sum_{k\le \d n}\bfrac{ne}{k}^k\bfrac{dke}{ck}^{ck}\bfrac{k}{n}^{ck}
\le \sum_{k\le \d n}\sqbs{\frac{e^{1+c}d^c}{c^c} \bfrac{k}{n}^{c-1} }^k
\end{align*}

To get the first expression, we choose $k$ cells corresponding to $S$, then we choose $ck$ configuration points within those $dk$ configuration points. 
$\frac{dk}{dn}$ is a bound on the probability that one of these $ck$ points matches to one of the $dk$ points corresponding to $S$. Let $u_k = \sqbs{\frac{e^{1+c}d^c}{c^c} \bfrac{k}{n}^{c-1} }^k$. If $k\le \ln n$, then 
$u_k \le\sqbs{O(1) \bfrac{\ln n}{n}^{c-1} }^k $, so  $\sum_{k\le \ln n}u_k = o(1)$.
If $\ln n < k \le \d n$, then using the value of $\d$, we get
\[u_k \le \sqbs{\frac{e^{1+c}d^c}{c^c} \d^{c-1} }^k =\frac{1}{3^k} = o(1/n)\] and so w.h.p. there is no subset violating the property.

\end{proof}

\begin{proof}[Proof of Theorem \ref{thm:main} (ii)]

Let $G\sim \G(n, 2r+1)$. Then Lemma \ref{lem:localsparse} applied to $G$ with $d=2r+1$ and $c = 1 + \frac{1}{4r}$ implies that every subset $S$ of size $|S|\le \d n$ has $e(S)\le \of{1 + \frac{1}{4r}}|S|$. Note that this property is inherited by any subgraph of $G$.

Let the edges of $G$ be $r$-colored and let $\wh{G}$ be the subgraph whose edges are the majority color. Then $|E(\wh{G})| \ge \frac{1}{r}|E(G)| = \frac{1}{r}\frac{2r+1}{2}n = \of{1 + \frac{1}{2r}}n$. Thus Theorem \ref{thm:k17} applied to $\wh{G}$ with $c_1 = 1 + \frac{1}{2r}$,  $c_2 = 1 + \frac{1}{4r}$ and $k = \d n$ implies that $\wh{G}$ has a cycle (and thus $G$ has a monochromatic cycle) of length at least

\[\of{\frac{\d n}{2} -1}\of{\bfrac{1+\frac{1}{2r}}{1+\frac{1}{4r}}^{1/2} -1}\ge \gamma n\] for appropriate $\gamma>0$.

\end{proof}

\begin{proof}[Proof Sketch of Theorem \ref{thm:main-out} (ii)]
The proof that $\gout{d}$ satisfies Lemma \ref{lem:localsparse} is essentially the proof for $\G(n,d)$ verbatim. Since $|E(\gout{(r+1)})|=(r+1)n$ we may apply Theorem \ref{thm:k17} to the graph induced by the majority color with $c_1 = 1+\frac 1r$,  $c_2 = 1+\frac{1}{2r}$ and $k=\delta n$ to complete the proof as above.

\end{proof}

\section{Conclusion} \label{sec:con}
We note that much of the work related to this problem concerns vertex colorings rather than edge colorings. See \cite{ADO, BS, HST, LMST, MP}. Coloring the vertices of a graph such that each color class induces only small components is a natural relaxation of proper coloring. It would be very interesting to consider these ``bounded monochromatic component'' problems in the context of random regular graphs. As just one example, in \cite{HST} it is proved that every $4$-regular graph $G$ has a vertex partition $V = V_1 \cup V_2$ such that $G[V_1]$ and $G[V_2]$ contain only components of order at most 6. It is also noted that in general 6 cannot be replaced by a number less than 4. One can ask for the best number which can be used if restricting attention to random 4-regular graphs.

In this note, we have shown that almost every $2r$-regular graph admits an $r$-edge-coloring where every component has $O(n^{0.7})$ many vertices. Our argument can be improved to give $O(n^{2/3 + o(1)})$ but an obvious open problem is to improve this upper bound for random $2r$-regular graphs.
From the algorithmic side, we ask the following question.
\begin{problem}
Does there exist a polynomial time algorithm which $r$-colors the edges of a random $2r$-regular graph such that w.h.p.\ every monochromatic component is of order $o(n)$?
\end{problem}
We note that our proof could solve this problem if one could 
find an algorithm which decomposes a random $2r$-regular graph into $r$ Hamilton cycles such that the probability that the algorithm outputs  a ``bad'' $r$-tuple of Hamilton cycles i.e.\@  one that does not satisfy Claim  \ref{shortpaths}, is $o(1)$.


Another extension concerns \emph{online} version of the above problem.
Let  $e_1,e_2,...,e_\tau$ be a random permutation of the edges of $\G(n, 2r)$. For $1\leq i\leq \tau$ at step $i$ the edge $e_i$ is revealed. 
The objective is to find an algorithm $\Aa$ that runs in polynomial time which, on step $i$, assigns a color from $[r]$ to $e_i$ without any knowledge of $e_{i+1},...,e_\tau$. $\Aa$ must maintain w.h.p.\ that the size of every monochromatic component is $o(n)$ until all edges have been revealed.
In  \cite{BFKLS}, Bohman et.\ al.\
consider both the online and the offline version for $G(n,p)$. However the ranges of $p$ for which they proved that $G(n,p)$ can be $r$-colored such that w.h.p.\@ 
every monochromatic component is of order $o(n)$ differ in the two settings.

A final interesting problem is to determine the best bound for \emph{arbitrary} $2r$-regular graphs.  Alon et.\ al.\ \cite{ADO} proved that every $(2r-1)$-regular graph admits an $r$-edge-coloring with bounded size components whereas Theorem \ref{thm:main} (ii) shows that there exist $(2r+1)$-regular graphs such that every coloring contains a linear order component (actually cycle). 
The following problem is essentially posed in \cite{LMST} for $r=2$, but we state it here.

\begin{problem}
Given $r\ge 2$, what is the smallest integer $f_r(n)$ such that every $2r$-regular graph on $n$ vertices admits an $r$-edge-coloring where all components have order at most $f_r(n)$? Is $f_r(n)$ sublinear? 
\end{problem}
\noindent The construction in \cite{ADO} provides a lower bound of $f_r(n) = \Omega(\log n)$. Perhaps the consideration of random $2r$-regular graphs could lead to an improvement of this lower bound.

\bibliographystyle{plain}

\bibliography{random-reg-ramsey}

\end{document}